\def\tS{\tilde S}
\def\Z{{\mathbb Z}}
\def\L{{\mathcal L}}
\def\hP{{\hat P}}
\def\C{{\mathcal C}}
\newtheorem{lemma}{Lemma}
\newtheorem{theorem}{Theorem}
\newtheorem{conjecture}{Conjecture}
\begin{document}
\title{The uncrossing partial order on matchings is Eulerian}
\author{Thomas Lam}\address{Department of Mathematics, University of Michigan,
2074 East Hall, 530 Church Street, Ann Arbor, MI 48109-1043, USA}
\email{tfylam@umich.edu}\thanks{T.L. was supported by NSF grant DMS-1160726.}
\begin{abstract}
We prove that the partial order $P_n$ on the set of matchings of $2n$ points on a circle, given by resolving crossings, is an Eulerian poset.  
\end{abstract}

\maketitle

\section{The Theorem}

Let $P_n$ denote the set of matchings on $2n$ points, labeled $1,2,\ldots,2n$ in order on a circle.  Each $\tau \in P_n$ can be represented by (usually many) medial graphs, or strand diagrams.  We say that a medial graph is {\it lensless} if any two strands intersect at most once.  The following lensless medial graph represents $\{(1,7),(2,9),(3,8),(4,10),(5,6)\} \in P_5$.

\begin{center}
\begin{tikzpicture}[scale=0.35]
\draw (0,0) circle (4cm);
\coordinate (t1) at (312:4);
\coordinate (t2) at (264:4);
\coordinate (t3) at (240:4);
\coordinate (t4) at (192:4);
\coordinate (t5) at (168:4);
\coordinate (t6) at (120:4);
\coordinate (t7) at (96:4);
\coordinate (t8) at (48:4);
\coordinate (t9) at (24:4);
\coordinate (t10) at (-24:4);
\draw (t1) to [bend left] (t7);
\draw (t3) to [bend left] (t8);
\draw (t2) to [bend right] (t9);
\draw (t5) to [bend right] (t6);
\draw (t4) to [bend left] (t10);

\node at (312:4.3) {$1$};
\node at (264:4.3) {$2$};
\node at (240:4.3) {$3$};
\node at (192:4.3) {$4$};
\node at (168:4.3) {$5$};
\node at (120:4.3) {$6$};
\node at (96:4.3) {$7$};
\node at (48:4.3) {$8$};
\node at (24:4.3) {$9$};
\node at (-24:4.3) {$10$};
\end{tikzpicture}
\end{center}

For $\tau \in P_n$ we let $c(\tau)$ denote the number of crossings in a lensless medial graph representing $\tau$.  The set $P_n$ can be equipped with a partial order obtained by resolving crossings: 
\begin{center}
\begin{tikzpicture}[scale = 0.7]
\node at (7.5,0) {or};
\draw (-1,-1) -- (1,1);
\draw (-1,1) -- (1,-1);

\draw[->] (2,0) -- (3,0);

\draw (4,-1) .. controls (5,-0.25) .. (6,-1);
\draw (4,1) .. controls (5,0.25) .. (6,1);
\draw (9,-1) .. controls (9.75,0) .. (9,1);
\draw (11,-1) .. controls (10.25,0) .. (11,1);
\end{tikzpicture}
\end{center}
We declare $\tau' \lessdot \tau$ if there is a lensless medial graph $G$ representing $\tau$ such that resolving a crossing in $G$ gives a lensless medial graph $G'$ representing $\tau'$.  The partial order $P_n$ is the transitive closure of these cover relations.

The poset $P_n$ is graded with rank function given by $c(\tau)$, and we refer the reader to \cite{ALT, Ken, Lam} for alternative descriptions of $P_n$.  The poset $P_n$ has a unique maximum element, and Catalan number $C_n$ of minimum elements.  Let $\hP_n$ denote $P_n$ with a minimum $\hat 0$ adjoined, where we declare that $c(\hat 0) = -1$.  Recall that a graded poset $P$ with a unique minimum and a unique maximum, is {\it Eulerian} if, for every interval $[x,y] \subset P$ where $x < y$, the number of elements of odd rank in $[x,y]$ is equal to the number of elements of even rank in $[x,y]$.  The following result was expected by many experts. 

\begin{theorem}\label{thm:main}
$\hP_n$ is an Eulerian poset.
\end{theorem}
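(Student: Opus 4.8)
The assertion to be proved is that $\mu_{\hP_n}(x,y) = (-1)^{\ell(x,y)}$ for every pair $x \le y$ in $\hP_n$; equivalently, that every closed interval of $\hP_n$ has equally many elements of even and of odd rank. A naive attempt would fix a lensless medial graph $G$ representing $\tau$ and parametrise the interval below $\tau$ by subsets of the $c(\tau)$ crossings of $G$. This does not work: resolutions interact — undoing one crossing may force another, or create a lens — so the interval is far from a Boolean lattice and one needs a more global handle. Two structural inputs I would keep in reserve for an induction: the dihedral group of symmetries of the $2n$ marked points acts on $P_n$; and ``deletion'' operations (erasing a strand of $G$, or merging two neighbouring marked points) relate $P_n$ to $P_{n-1}$ and to products of smaller uncrossing posets, and ought to identify intervals of $\hP_n$ with link-type subposets.

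The plan is to deduce the theorem from the stronger claim that $P_n$ is the face poset of a regular CW complex $\Gamma_n$ homeomorphic to a closed ball of dimension $N = \binom{n}{2}$, with $\tau$ indexing an open cell $e_\tau$ of dimension $c(\tau)$, with closure order exactly $P_n$, and with the unique maximum (the diameter matching $\{(i,\,i+n)\}_{i=1}^{n}$, for which $c = N$) indexing the top cell. The natural model for $\Gamma_n$ is the compactified space of circular planar electrical networks on $n$ boundary nodes, whose strata correspond to matchings through the medial-graph dictionary; using the descriptions in \cite{ALT, Ken, Lam} and the geometry of that space, one would verify that each stratum closure is a topological ball, that $\Gamma_n$ is regular (characteristic maps are homeomorphisms onto closed cells), and that the stratum of $\tau$ has dimension $c(\tau)$. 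Granting this, the Eulerian property is formal. Such a face poset is \emph{thin}: a codimension-$2$ cell of the boundary sphere $\partial\Gamma_n \cong S^{N-1}$ lies in exactly two codimension-$1$ cells. More generally, for $\sigma < \tau$ the interval $[\sigma,\tau]$ is the augmented face poset of the link of the cell indexed by $\sigma$ inside the closed ball $\overline{e_\tau}$, and that link is again a closed ball of dimension $c(\tau)-c(\sigma)-1$ when $\sigma$ is a proper face (and a point otherwise); running induction on $c(\tau)$ and, within that, on $c(\tau)-c(\sigma)$, every such poset is Eulerian because the order complex of the open interval $(\sigma,\tau)$ is then homotopy equivalent to a sphere $S^{c(\tau)-c(\sigma)-2}$, whence $\mu_{\hP_n}(\sigma,\tau) = (-1)^{c(\tau)-c(\sigma)} = (-1)^{\ell(\sigma,\tau)}$. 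The lower intervals $[\hat 0,\tau]$ and the full poset $\hP_n$ itself arise when the relevant cell is all of $\overline{e_\tau}$, and are handled by the same induction, with base cases $N \le 1$ (where $\hP_2$ is the diamond $B_2$).

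The step I expect to be the main obstacle is exactly the geometric one: showing that the closure of each electrical-network stratum is a topological ball and that the attaching data are homeomorphisms (the ``regular CW'', or ``CW poset'', property). This is genuine topology, not visible from the medial-graph combinatorics alone. Two fallbacks avoid it. One might hope to realise $\hP_n$ as a lower interval in the Bruhat order of some (affine) Coxeter group — the strand diagrams are suggestive — and invoke Verma's theorem that Bruhat intervals are Eulerian; but the known descriptions of $P_n$ suggest it is only a subquotient of such a poset, so this does not apply directly. More promisingly, one can stay combinatorial and prove that $\hP_n$ is \emph{thin} (a local check: two medial graphs obtained by resolving two crossings of $G$ in the two orders recombine in exactly one further way) and \emph{Cohen--Macaulay} (for instance via a recursive atom ordering or EL-shelling assembled from the deletion operations above, of the kind implicit in \cite{ALT, Lam}); a thin Cohen--Macaulay graded poset with $\hat 0$ and $\hat 1$ is Eulerian, since each interval's order complex is then a Cohen--Macaulay pseudomanifold, hence a homology sphere of the expected dimension. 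In every route the crux is the same — upgrading the bare cover-relation description of $P_n$ to enough topological or homological control to pin down the M\"obius function, after which the Eulerian identity falls out.
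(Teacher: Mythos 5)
Your proposal does not contain a proof: each of the three routes you describe bottoms out in a statement strictly stronger than the theorem and not established anywhere. The regular-CW/ball claim for the compactified space of electrical networks is precisely what the paper records as merely ``expected by experts''; the shellability route is stated in the paper as an open conjecture; and you yourself note that the direct appeal to Verma fails because $P_n$ is not an interval of a Bruhat order. Reducing Eulerianness to ``every open interval has the homotopy type of a sphere'' is a reduction in the wrong direction --- Eulerian is the weakest of these properties, and the point is to prove it without the topology.

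What you missed is that Verma's \emph{method} does not require $P_n$ to be a Bruhat interval; it only requires a lifting property internal to $P_n$. The paper extracts this from the embedding $\iota : P_n \hookrightarrow \tS_{2n}$ of \cite{Lam}: conjugation $g_\tau \mapsto s_i g_\tau s_i$ preserves the image of $\iota$, defining an operation $s_i \cdot \tau$ on matchings, and the trichotomy $\Z/2n\Z = A(\tau) \cup B(\tau) \cup C(\tau)$ (strands $i$ and $i+1$ do not cross / cross / are matched) governs whether $s_i\cdot\tau$ covers $\tau$, is covered by $\tau$, or equals $\tau$. Applying the standard Bruhat lifting lemma twice (once on the left, once on the right) gives the key fact: if $\tau \le \eta$ and $i \in A(\tau)\cap B(\eta)$, then $\tau \le s_i\cdot\eta$ and $s_i\cdot\tau \le \eta$. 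One then runs a descending induction on $c(\tau)+c(\eta)$, choosing $i \in A(\tau)$ and splitting on whether $i \in A(\eta)$ or $i \in B(\eta)$: in the latter case $\sigma \mapsto s_i\cdot\sigma$ is a parity-reversing involution of $[\tau,\eta]$, and in the former case $[\tau,\eta]$ is written as a difference of intervals already handled by induction. Intervals $[\hat 0,\eta]$ require a separate step: an involution on part of the interval, plus the identification of the remaining set with $[\hat 0,\kappa]$ for a single element $\kappa$ obtained by uncrossing the first intersection of strands $i$ and $i+1$ in a medial graph for $\eta$. None of this needs thinness, shellability, or any statement about the topology of $E_n$.
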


Here is a picture of $\hP_3$:
\begin{center}
\begin{tikzpicture}[scale = 0.3]
\draw (0,0) circle (2cm);
\coordinate (a1) at (130:2);
\coordinate (a2) at (50:2);
\coordinate (a3) at (10:2);
\coordinate (a4) at (-50:2);
\coordinate (a5) at (-130:2);
\coordinate (a6) at (170:2);
\draw (a1) to [bend right] (a4);
\draw (a2) to [bend left] (a5);
\draw (a3) to [bend right] (a6);

\draw (0,-2.3) -- (0,-3.7);
\draw (0,-2.3) -- (5,-3.7);
\draw (0,-2.3) -- (-5,-3.7);
\begin{scope}[shift = {(0,-6)}]
\draw (0,0) circle (2cm);
\coordinate (a1) at (120:2);
\coordinate (a2) at (60:2);
\coordinate (a3) at (0:2);
\coordinate (a4) at (-60:2);
\coordinate (a5) at (-120:2);
\coordinate (a6) at (180:2);
\draw (a1) -- (a4);
\draw (a2) -- (a6);
\draw (a3) -- (a5);
\draw (0,-2.3) -- (-17.5+1*5,-6+2.3);
\draw (0,-2.3) -- (-17.5+4*5,-6+2.3);
\draw (0,-2.3) -- (-17.5+5*5,-6+2.3);
\draw (0,-2.3) -- (-17.5+2*5,-6+2.3);
\end{scope}

\begin{scope}[shift = {(-5,-6)}]
\draw (0,0) circle (2cm);
\coordinate (a1) at (120:2);
\coordinate (a2) at (60:2);
\coordinate (a3) at (0:2);
\coordinate (a4) at (-60:2);
\coordinate (a5) at (-120:2);
\coordinate (a6) at (180:2);
\draw (a1) -- (a3);
\draw (a2) -- (a5);
\draw (a4) -- (a6);
\end{scope}
\begin{scope}[shift={(0,-6)}]
\draw (-5,-2.3) -- (-17.5+2*5,-6+2.3);
\draw (-5,-2.3) -- (-17.5+3*5,-6+2.3);
\draw (-5,-2.3) -- (-17.5+5*5,-6+2.3);
\draw (-5,-2.3) -- (-17.5+6*5,-6+2.3);
\end{scope}

\begin{scope}[shift = {(5,-6)}]
\draw (0,0) circle (2cm);
\coordinate (a1) at (120:2);
\coordinate (a2) at (60:2);
\coordinate (a3) at (0:2);
\coordinate (a4) at (-60:2);
\coordinate (a5) at (-120:2);
\coordinate (a6) at (180:2);
\draw (a2) -- (a4);
\draw (a3) -- (a6);
\draw (a1) -- (a5);
\end{scope}
\begin{scope}[shift={(0,-6)}]
\draw (5,-2.3) -- (-17.5+1*5,-6+2.3);
\draw (5,-2.3) -- (-17.5+3*5,-6+2.3);
\draw (5,-2.3) -- (-17.5+4*5,-6+2.3);
\draw (5,-2.3) -- (-17.5+6*5,-6+2.3);
\end{scope}

\begin{scope}[shift = {(-12.5,-12)}]
\draw (0,0) circle (2cm);
\coordinate (a1) at (120:2);
\coordinate (a2) at (60:2);
\coordinate (a3) at (0:2);
\coordinate (a4) at (-60:2);
\coordinate (a5) at (-120:2);
\coordinate (a6) at (180:2);
\draw (a3) -- (a4);
\draw (a5) -- (a1);
\draw (a6) -- (a2);
\end{scope}
\begin{scope}[shift={(0,-12)}]
\draw (-12.5,-2.3) -- (-15+1*5,-6+2.3);
\draw (-12.5,-2.3) -- (-15+5*5,-6+2.3);
\end{scope}

\begin{scope}[shift = {(-7.5,-12)}]
\draw (0,0) circle (2cm);
\coordinate (a1) at (120:2);
\coordinate (a2) at (60:2);
\coordinate (a3) at (0:2);
\coordinate (a4) at (-60:2);
\coordinate (a5) at (-120:2);
\coordinate (a6) at (180:2);
\draw (a4) -- (a5);
\draw (a6) -- (a2);
\draw (a3) -- (a1);
\end{scope}
\begin{scope}[shift={(0,-12)}]
\draw (-7.5,-2.3) -- (-15+2*5,-6+2.3);
\draw (-7.5,-2.3) -- (-15+3*5,-6+2.3);
\end{scope}

\begin{scope}[shift = {(-2.5,-12)}]
\draw (0,0) circle (2cm);
\coordinate (a1) at (120:2);
\coordinate (a2) at (60:2);
\coordinate (a3) at (0:2);
\coordinate (a4) at (-60:2);
\coordinate (a5) at (-120:2);
\coordinate (a6) at (180:2);
\draw (a5) -- (a6);
\draw (a1) -- (a3);
\draw (a2) -- (a4);
\end{scope}
\begin{scope}[shift={(0,-12)}]
\draw (-2.5,-2.3) -- (-15+1*5,-6+2.3);
\draw (-2.5,-2.3) -- (-15+4*5,-6+2.3);
\end{scope}

\begin{scope}[shift = {(2.5,-12)}]
\draw (0,0) circle (2cm);
\coordinate (a1) at (120:2);
\coordinate (a2) at (60:2);
\coordinate (a3) at (0:2);
\coordinate (a4) at (-60:2);
\coordinate (a5) at (-120:2);
\coordinate (a6) at (180:2);
\draw (a1) -- (a6);
\draw (a2) -- (a4);
\draw (a3) -- (a5);
\end{scope}
\begin{scope}[shift={(0,-12)}]
\draw (2.5,-2.3) -- (-15+2*5,-6+2.3);
\draw (2.5,-2.3) -- (-15+5*5,-6+2.3);
\end{scope}

\begin{scope}[shift = {(7.5,-12)}]
\draw (0,0) circle (2cm);
\coordinate (a1) at (120:2);
\coordinate (a2) at (60:2);
\coordinate (a3) at (0:2);
\coordinate (a4) at (-60:2);
\coordinate (a5) at (-120:2);
\coordinate (a6) at (180:2);
\draw (a1) -- (a2);
\draw (a3) -- (a5);
\draw (a4) -- (a6);
\end{scope}
\begin{scope}[shift={(0,-12)}]
\draw (7.5,-2.3) -- (-15+1*5,-6+2.3);
\draw (7.5,-2.3) -- (-15+3*5,-6+2.3);
\end{scope}

\begin{scope}[shift = {(12.5,-12)}]
\draw (0,0) circle (2cm);
\coordinate (a1) at (120:2);
\coordinate (a2) at (60:2);
\coordinate (a3) at (0:2);
\coordinate (a4) at (-60:2);
\coordinate (a5) at (-120:2);
\coordinate (a6) at (180:2);
\draw (a2) -- (a3);
\draw (a4) -- (a6);
\draw (a1) -- (a5);
\end{scope}
\begin{scope}[shift={(0,-12)}]
\draw (12.5,-2.3) -- (-15+2*5,-6+2.3);
\draw (12.5,-2.3) -- (-15+4*5,-6+2.3);
\end{scope}

\begin{scope}[shift = {(-10,-18)}]
\draw (0,0) circle (2cm);
\coordinate (a1) at (120:2);
\coordinate (a2) at (60:2);
\coordinate (a3) at (0:2);
\coordinate (a4) at (-60:2);
\coordinate (a5) at (-120:2);
\coordinate (a6) at (180:2);
\draw (a1) -- (a2);
\draw (a3) -- (a4);
\draw (a5) -- (a6);
\end{scope}
\begin{scope}[shift = {(-5,-18)}]
\draw (0,0) circle (2cm);
\coordinate (a1) at (120:2);
\coordinate (a2) at (60:2);
\coordinate (a3) at (0:2);
\coordinate (a4) at (-60:2);
\coordinate (a5) at (-120:2);
\coordinate (a6) at (180:2);
\draw (a2) -- (a3);
\draw (a4) -- (a5);
\draw (a1) -- (a6);
\end{scope}
\begin{scope}[shift = {(0,-18)}]
\draw (0,0) circle (2cm);
\coordinate (a1) at (120:2);
\coordinate (a2) at (60:2);
\coordinate (a3) at (0:2);
\coordinate (a4) at (-60:2);
\coordinate (a5) at (-120:2);
\coordinate (a6) at (180:2);
\draw (a1) -- (a2);
\draw (a3) -- (a6);
\draw (a4) -- (a5);
\end{scope}
\begin{scope}[shift = {(5,-18)}]
\draw (0,0) circle (2cm);
\coordinate (a1) at (120:2);
\coordinate (a2) at (60:2);
\coordinate (a3) at (0:2);
\coordinate (a4) at (-60:2);
\coordinate (a5) at (-120:2);
\coordinate (a6) at (180:2);
\draw (a2) -- (a3);
\draw (a4) -- (a1);
\draw (a6) -- (a5);
\end{scope}
\begin{scope}[shift = {(10,-18)}]
\draw (0,0) circle (2cm);
\coordinate (a1) at (120:2);
\coordinate (a2) at (60:2);
\coordinate (a3) at (0:2);
\coordinate (a4) at (-60:2);
\coordinate (a5) at (-120:2);
\coordinate (a6) at (180:2);
\draw (a3) -- (a4);
\draw (a2) -- (a5);
\draw (a1) -- (a6);
\end{scope}

\begin{scope}[shift = {(0,-24)}]
\draw (0,0) circle (2cm);
\begin{scope}[shift={(0,6)}]
\draw (0,-6 + 2.3) -- (-15+1*5,-2.3);
\draw (0,-6 + 2.3) -- (-15+2*5,-2.3);
\draw (0,-6 + 2.3) -- (-15+3*5,-2.3);
\draw (0,-6 + 2.3) -- (-15+4*5,-2.3);
\draw (0,-6 + 2.3) -- (-15+5*5,-2.3);
\end{scope}
\end{scope}

\end{tikzpicture}
\end{center}

The same result for the Bruhat order of a Weyl group was established by Verma \cite{Ver}, and our approach is similar.  Indeed, we rely on an embedding \cite{Lam} of $P_n$ into the (dual of the) Bruhat order of an affine symmetric group.  Theorem \ref{thm:main} further amplifies the analogy between Bruhat order and the uncrossing partial order $P_n$.

Like Bruhat order, the poset $P_n$ has has a topological interpretation.  In \cite{Lam} a compactification $E_n$ of the space of circular planar electrical networks with $n$ boundary points was constructed.  We have a stratification $E_n = \bigsqcup_{\tau \in P_n} E_\tau$ by {\it electroid cells}, and $\overline{E_\tau} = \bigsqcup_{\tau' \leq \tau} E_{\tau'}$.  The same partial order occurs in the study of the positive orthogonal Grassmannian and scattering amplitudes for ABJM \cite{HW,HWX}.  The partial order $P_n$ is also discussed in this context by Kim and Lee \cite{KL} who observed a special case of Theorem \ref{thm:main}.  It seems to be expected by experts that these spaces are homeomorphic to balls.

We remark that in \cite{ALT}, it is shown that $P_n$ is thin (intervals of length two are diamonds).  The following conjecture is probably widely expected (see the related \cite[Conjecture 3.3]{ALT}):
\begin{conjecture}
$\hP_n$ is lexicographically shellable.
 \end{conjecture}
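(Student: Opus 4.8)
The natural plan is to transport lexicographic shellability from the affine Bruhat order through the very embedding of \cite{Lam} that underlies our proof of Theorem~\ref{thm:main}. Bruhat order on any Coxeter group is EL-shellable: fixing a reflection ordering, one labels each cover relation $u \lessdot v$ by the unique reflection $t$ with $v = ut$, and Bj\"orner and Wachs showed this is an EL-labeling; the dual order is EL-shellable as well. Since intervals of EL-shellable posets are again EL-shellable under the restricted labeling, the conjecture would follow at once if $\hP_n$ were isomorphic to an interval $[u,w]$ in the dual Bruhat order of $\tS$.

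The first and most important step is therefore to upgrade the order-embedding $P_n \hookrightarrow \tS$ of \cite{Lam} into an identification of $\hP_n$ with a genuine interval. Concretely, one would exhibit affine permutations $u < w$ so that the embedding carries $\hP_n$ isomorphically onto $[u,w]$, with $u$ the image of $\hat 0$, the $C_n$ atoms of $[u,w]$ the images of the non-crossing matchings, and $w$ the image of the unique maximum of $P_n$. The rank function $c(\tau)+1$ on $\hP_n$ should then coincide with $\ell(\cdot)-\ell(u)$, consistently with the shift $c(\hat 0)=-1$, and restricting the reflection labeling to $[u,w]$ would produce the desired EL-labeling.

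The main obstacle is precisely this identification, and its difficulty is what keeps the statement a conjecture rather than a corollary of Theorem~\ref{thm:main}. The embedding of \cite{Lam} is used only as an order-embedding compatible enough with ranks to run a Verma-style Möbius computation; it need not be an isomorphism onto an interval, and its image may be a proper induced subposet of a Bruhat interval rather than a convex one. If the image fails to be an interval, shellability does not transfer, because induced subposets of shellable posets need not be shellable. Overcoming this requires either producing a different, genuinely interval-valued presentation of $\hP_n$ inside an affine (or finite) Bruhat order, or controlling exactly which Bruhat covers are induced so that the restricted reflection labeling still satisfies the EL conditions on $\hP_n$ alone.

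Absent such an interval presentation, I would fall back on the recursive-atom-ordering characterization of CL-shellability of Bj\"orner and Wachs, working intrinsically with crossing resolutions. Here a cover $\tau' \lessdot \tau$ is recorded by the pair of strands whose crossing is resolved in a witnessing lensless medial graph, and one orders the $C_n$ minimal non-crossing matchings and, recursively, the atoms of each upper interval. The thinness of $P_n$ established in \cite{ALT}---every length-two interval is a diamond---is exactly the local regularity needed to force uniqueness of the increasing chain in each such interval, and is the natural input to the recursive atom ordering axioms. The genuinely hard combinatorial core is the global compatibility of the ordering: one must show that the increasing maximal chain in every interval is unique and lexicographically least \emph{independently of the chosen lensless representative}, which demands a careful analysis of how crossing resolutions interact across the many medial graphs representing a single matching.
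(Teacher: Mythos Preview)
The paper does not prove this statement: it is explicitly presented as a conjecture, and the only evidence offered is an explicit EL-labeling in the special case $n=3$. There is therefore no ``paper's own proof'' to compare against.

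Your proposal is likewise not a proof but an outline of two possible strategies, and you are candid about this. Your diagnosis of the first strategy is accurate. The embedding $\iota:P_n\hookrightarrow\tS_{2n}$ of \cite{Lam} realizes $P_n$ only as an \emph{induced} subposet of the dual Bruhat order; nowhere is it claimed to be an interval, and indeed the image has $C_n$ extremal elements on one end with no natural common cover inside $\tS_{2n}$ playing the role of $\hat 0$. Since EL-shellability is inherited by intervals but not by arbitrary induced subposets, this route, as you say, does not go through without substantial new input. Your fallback via recursive atom orderings is reasonable in spirit and is loosely consistent with the ad hoc labeling the paper gives for $\hP_3$, but the paper offers no general mechanism, and your sketch does not supply one either: the step you flag as ``the genuinely hard combinatorial core'' is precisely the missing argument.

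In short, neither the paper nor your proposal contains a proof; both treat the statement as open.
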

 
An $EL$-shelling of $\hP_3$ is given as follows.  Denote the five non-crossing matchings by $\alpha_1,\alpha_2,\beta_1,\beta_2,\beta_3$, in order from left to right, as shown in the diagram above.  We label each edge by one of the symbols $\{\alpha_1,\alpha_2,\beta_1,\beta_2,\beta_3\}$. Label the covers $\hat 0 \lessdot \tau$ by $\tau$.  Label the cover $\eta \lessdot \tau$ where $c(\tau) = 1$, by $\eta'$ where $\eta' \neq \eta$ is uniquely defined by $\hat 0 \lessdot \eta' \lessdot \tau$.  Label the cover $\eta \lessdot \tau$ where $c(\tau) = 2$, by $\beta_i$, where $\beta_i$ is uniquely determined by the conditions $\beta_i < \tau$ and $\beta_i \not \leq \eta$.  Label the cover $\eta \lessdot \hat 1 = \tau_{\rm top}$ by $\beta_i$ where $\beta_i$ is uniquely determined by the condition $\beta_i \not \leq \eta$.

Finally, order the symbols by $\alpha_1 < \beta_1 < \beta_2 < \beta_3 < \alpha_2$.

\medskip
{\bf Acknowledgments.} It is a pleasure to thank Yu-tin Huang, Rick Kenyon, and Pavlo Pylyavskyy for conversations motivating this work.

\section{Medial graphs with lenses}
The notion of a medial graph representing a matching requires some care in the case that lenses are present. 

Suppose $G$ is any medial graph.  A {\it lens} in $G$ is a pair of strands that intersect twice or more.  By repeatedly applying the moves in Figures \ref{fig:Yang-Baxter} - \ref{fig:loop}, we can reduce $G$ to a lensless medial graph $G'$.  If $G'$ represents a matching $\tau \in P_n$, we shall say that $G$ represents $\tau \in P_n$ as well.  The following result follows from the interpretation of medial graphs in terms of electrical networks \cite{ALT,Ken,Lam}.

\begin{lemma}
Suppose $G$ is a medial graph with lenses and $G'$, $G''$ are two lensless graphs obtained from $G$ by the moves Figures \ref{fig:Yang-Baxter} - \ref{fig:loop}.  Then $G'$ and $G''$ represent the same matching $\tau \in P_n$.
\end{lemma}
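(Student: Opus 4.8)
The plan is to exhibit a combinatorial invariant of arbitrary medial graphs that is preserved by each of the reduction moves in Figures~\ref{fig:Yang-Baxter}--\ref{fig:loop} and that, on a lensless medial graph, recovers the matching it represents. For any medial graph $G$ in the disk, each of the $2n$ boundary points lies on exactly one strand, and every strand is either a closed curve in the interior of the disk or joins two of the boundary points; the latter strands thus pair up the $2n$ boundary points, defining a matching $\mu(G)\in P_n$. For a lensless $G$, the convention recalled in Section~1 (and illustrated by the $P_5$ example) is precisely that ``$G$ represents $\tau$'' means $\tau=\mu(G)$. It therefore suffices to prove that $\mu(G')=\mu(G)=\mu(G'')$, and for this it is enough to show that $\mu$ is unchanged by a single application of any of the moves.

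First I would verify invariance of $\mu$ move by move. The Yang--Baxter move is an ambient isotopy of the strands, so it changes no strand and in particular no strand's pair of endpoints. The lens-removal move takes two strands that cross twice across an empty bigon and replaces the two crossings by the configuration in which the two bigon arcs are pulled apart; the local picture shows that each of the two strands is reconnected to itself (this is a Reidemeister-II-type move, not a smoothing that would interchange the two strands), so once more every strand keeps its two endpoints. The monogon-removal move straightens a self-crossing of a single strand, which does not change that strand's two endpoints, and the loop-removal move deletes a closed curve, which carries no boundary endpoint. Thus $\mu$ is preserved by every move; applying this along the sequences of moves producing $G'$ and $G''$ from $G$ yields $\mu(G')=\mu(G)=\mu(G'')$. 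Finally, since the loop- and monogon-removal moves are among those available, we may carry the reduction out completely, so that $G'$ and $G''$ contain no closed curves and no self-crossing strands; each is then an honest strand diagram representing the matching of its endpoints, namely $\mu(G)$, as desired.

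Conceptually this is the combinatorial shadow of the dictionary between medial graphs and circular planar electrical networks used in \cite{ALT,Ken,Lam}: the three families of moves correspond to the generators of electrical equivalence---star--triangle together with series, parallel, and loop/pendant reductions---under which the response matrix, and hence the point of $E_n$ it determines together with its electroid cell, is unchanged; $\mu$ simply records the label of that cell. I expect the only step needing genuine care to be the precise check that the lens-removal move resolves the empty bigon in the ``pulled apart'' rather than the ``smoothed'' way, together with the routine bookkeeping confirming that no move ever splits one strand into two or fuses two strands into one; both come down to inspecting the finitely many local pictures in Figures~\ref{fig:Yang-Baxter}--\ref{fig:loop}.
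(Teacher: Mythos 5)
Your strategy fails at exactly the point you flag as the one needing care: the lens-removal move of Figure~\ref{fig:lens} is \emph{not} a Reidemeister-II ``pull apart'' move, and it does \emph{not} preserve strand connectivity. Look at the picture again: the left-hand side joins the lower-left corner of the local disk to the lower-right and the upper-left to the upper-right (two crossings), while the right-hand side joins lower-left to upper-right and upper-left to lower-right (one crossing). The move takes two crossings to \emph{one} crossing and interchanges the endpoints of the two strands. This is forced: two arcs in a disk whose endpoints do not interleave on the boundary must cross an even number of times, so there is no way to pass from two crossings to one while keeping the endpoints fixed. This version of the move is the correct one because it is the medial-graph shadow of series/parallel reduction of the underlying network (two edges of $\Gamma$ in series or in parallel, i.e.\ an empty lens, are replaced by a single edge, i.e.\ a single crossing). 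Consequently your invariant $\mu$ is simply not preserved. Concretely, for $n=2$ take the medial graph in which the strands from $1$ to $2$ and from $3$ to $4$ bulge into each other and cross twice: then $\mu(G)=\{(1,2),(3,4)\}$, but lens removal produces the one-crossing diagram of $\{(1,3),(2,4)\}$, which is the matching $G$ actually represents.

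Because the moves do not preserve strand connectivity, some non-local input is needed to see that the outcome of a reduction is independent of the order and choice of moves, and that is precisely what the paper's one-line justification supplies: $G$ is the medial graph of an electrical network $\Gamma$, each move corresponds to an equivalence of networks (Y--$\Delta$, series/parallel reduction, deletion of interior loops and pendants) preserving the electroid cell $E_\tau$ containing $\L(\Gamma)$, and any lensless reduction of $G$ therefore represents the label $\tau$ of that cell. You mention this dictionary as the ``conceptual shadow'' of your argument, but it has to carry the entire weight of the proof; the only alternative would be a direct confluence (diamond-lemma) argument for the rewriting system, which is a substantially different and harder route than the local invariance check you propose.
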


\begin{lemma}\label{lem:moreuncrossing}
Suppose $G$ is a medial graph, possibly with lenses, that represents $\tau$.  Suppose $G'$ is obtained from $G$ by resolving any number of crossings in any way, and removing all closed interior loops that result.  If $G'$ represents $\tau'$, then $\tau \geq \tau'$.
\end{lemma}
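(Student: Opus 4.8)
The plan is to reduce everything to the case of resolving a single crossing. Resolving a crossing is a local modification, so distinct resolutions commute; and the matching represented by a medial graph is, by construction, unaffected by deleting closed loops (deletion is part of the reduction process), so the loop deletions in the statement do not change $\tau'$. Thus $G'$ is obtained from $G$ by a chain $G = G^{(0)}, G^{(1)}, \ldots, G^{(k)}$ in which each $G^{(i+1)}$ is $G^{(i)}$ with one crossing resolved, and $\tau' = \tau^{(k)}$ where $\tau^{(i)}$ is the matching represented by $G^{(i)}$. So it suffices to prove: \emph{if $G$ represents $\tau$, $p$ is a crossing of $G$, and $G_1$ is $G$ with $p$ resolved, then $G_1$ represents some $\tau_1 \le \tau$}; applying this along the chain gives $\tau' \le \cdots \le \tau^{(0)} = \tau$.

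For the single-crossing statement I would induct on the number of crossings of $G$. The base case to keep in mind is $G$ lensless with resolving $p$ again lensless: then $\tau_1 \lessdot \tau$ is a covering relation of $P_n$ by definition. For the inductive step one peels off a reduction move of Figures~\ref{fig:Yang-Baxter}--\ref{fig:loop}. If $G$ has a lens that some such move removes without involving $p$, let $G^\circ$ be the result: then $G^\circ$ represents $\tau$ as well and has fewer crossings, the move commutes with resolving $p$, so $G_1$ reduces to ``$G^\circ$ with $p$ resolved'', and the inductive hypothesis gives $\tau_1 \le \tau(G^\circ) = \tau$.

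The cases I expect to be the main obstacle are the remaining ones: $p$ lies on a lens $L$, or $G$ is lensless but resolving $p$ creates a lens. Both force one to confront exactly the ``care'' that representing a matching requires in the presence of lenses: removing a lens is realized, on the underlying electrical network, by a series or parallel reduction, and such a reduction reconnects the two strands of the lens. One must therefore check, for each of the two smoothings available at the relevant crossing, that after performing the induced strand‑reconnection (and after the reduction absorbs whatever closed loop was created) the resulting matching is still $\le \tau$ — heuristically it equals $\tau$ for the smoothing compatible with removing $L$ and drops by one covering step for the other, but pinning this down, together with the interaction between crossing resolutions and loop deletions, is the technical core. An alternative that bypasses this bookkeeping uses the topology of \cite{Lam}: a weighted medial graph determines a point of the compactification $E_n$ in the electroid cell $E_\tau$ of its matching; the two smoothings of a crossing $p$ amount to sending the conductance of the corresponding edge to $0$ (deletion) or $\infty$ (contraction), so the point moves into $\overline{E_\tau}$, and deleting a closed loop only discards an electrically inert piece. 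The limit lies in the cell $E_{\tau_1}$ of the matching of $G_1$, so $E_{\tau_1}$ meets $\overline{E_\tau} = \bigsqcup_{\tau' \le \tau} E_{\tau'}$, forcing $E_{\tau_1} \subseteq \overline{E_\tau}$, i.e.\ $\tau_1 \le \tau$; this even handles resolving many crossings at once. What needs checking in this approach is that the limiting network really has combinatorial type $\tau_1$ — that passing conductances to $0$ or $\infty$ matches, at the medial‑graph level, with smoothing the crossings.
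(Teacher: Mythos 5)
Your ``alternative'' at the end is precisely the paper's proof. The paper interprets $G$ as the medial graph of an electrical network $\Gamma$ with $\L(\Gamma)\in E_\tau$, observes that $G'$ is the medial graph of the network $\Gamma'$ obtained from $\Gamma$ by deleting or contracting the edges corresponding to the resolved crossings (closed interior loops become isolated interior components of $\Gamma'$, hence are electrically inert), so that $\L(\Gamma')\in\overline{E_\tau}$, and concludes from $\overline{E_\tau}=\bigsqcup_{\eta\le\tau}E_\eta$, which is \cite[Theorem 5.7]{Lam}. The one point you flag as still needing verification --- that the two smoothings of a crossing correspond at the network level to deletion and contraction of the corresponding edge --- is exactly the dictionary the paper imports from \cite{Lam} without further comment, so your second argument is complete modulo the same citations the paper itself relies on; note it also absorbs the loop removals and the simultaneous resolution of many crossings in one stroke, just as the paper's does. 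By contrast, your primary line of attack (induction on the number of crossings, peeling off the moves of Figures \ref{fig:Yang-Baxter}--\ref{fig:loop}) is not what the paper does, and as written it is not yet a proof: the cases you yourself single out as the technical core --- the resolved crossing lies on a lens, or resolving it creates a lens --- are precisely where the combinatorial bookkeeping is nontrivial, and you only describe what ``heuristically'' should happen there. If you want a self-contained combinatorial argument those cases must actually be carried out; otherwise, lead with the electrical-network argument, which is shorter and is the intended one.
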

\begin{proof}
In the language of \cite{Lam}, $G$ is the medial graph of some electrical network $\Gamma$, such that $\L(\Gamma) \in E_\tau$.  Then $G'$ is the medial graph of $\Gamma'$, where $\Gamma'$ is obtained from $\Gamma$ by either removing some edges, or contracting some edges.  (Interior loops correspond to isolated interior components of $\Gamma'$.)  Thus we must have $\L(\Gamma') \in \overline{E_{\tau}}$, and the result follows from \cite[Theorem 5.7]{Lam}, which states that $\overline{E_{\tau}} = \bigsqcup_{\eta \leq \tau} E_\eta$.
\end{proof}

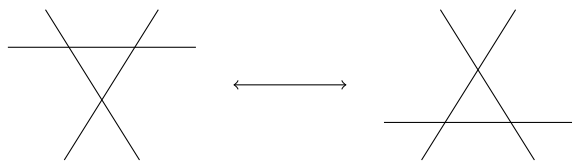
\begin{figure}
\begin{tikzpicture}[scale=0.5]
\draw (0,0) -- (5,0);
\draw (1,1) -- (3.5,-3);
\draw (4,1) -- (1.5,-3);
\draw[<->] (6,-1) -- (9,-1);
\begin{scope}[shift = {(10,-2)}]
\draw (0,0) -- (5,0);
\draw (1.5,3) -- (4,-1);
\draw (3.5,3) -- (1,-1);
\end{scope}
\end{tikzpicture}
\caption{The Yang-Baxter move}
\label{fig:Yang-Baxter}
\end{figure}

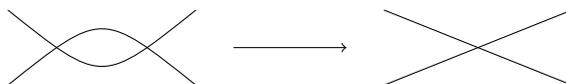
\begin{figure}

\begin{tikzpicture}[scale=0.5]
\draw (0,0) .. controls (2.5,2) .. (5,0);
\draw (0,2) .. controls (2.5,0) .. (5,2);
\draw [->] (6,1) -- (9,1);
\draw (10,0) -- (15,2);
\draw (10,2) -- (15,0);
\end{tikzpicture}

\caption{Lens removal}
\label{fig:lens}
\end{figure}

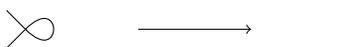
\begin{figure}

\begin{tikzpicture}[scale = 0.5]
\draw (1.5,-1.5) -- (2,-2);
\draw (2,-2) .. controls (3,-3) and (3,-1) .. (2,-2);
\draw (2,-2) -- (1.5,-2.5);
\draw [->] (5,-2) -- (8,-2);
\draw (10.5,-1.5) -- (10.5,-2.5);
\end{tikzpicture}

\caption{Loop removal}
\label{fig:loop}
\end{figure}

\section{The Proof}
For a subset $S \subset \hP_n$, write $\chi(S) = \sum_{\tau \in S} (-1)^{c(\tau)}$.  We need to show that $\chi([\tau,\eta]) = 0$ whenever $\tau < \eta$.

Let $\tS_{2n}$ denote the poset of affine permutations with period $2n$.  In \cite{Lam} we showed that there is an injection $\iota: P_n \hookrightarrow \tS_{2n}$ expressing $P_n$ as dual to an induced subposet of $\tS_{2n}$.  Write $g_\tau := \iota(\tau)$.  We refer the reader to \cite{Lam} for full details.

For $f \in \tS_{2n}$, we let 
$$
D_L(f) := \{i \in \Z/2n\Z \mid s_i f < f\}  \;\;\; \text{and} \;\;\; D_R(f) := \{i \in \Z/2n\Z \mid  fs_i < f\} 
$$
be the left and right descent sets of $f$.  The following result is standard, see \cite[Proposition 2.2.7]{BB}.

\begin{lemma}\label{L:Bruhat}
Suppose $f \leq g$ in $\tS_{2n}$.  If $i \in D_L(g) \setminus D_L(f)$ then $f \leq s_ig$ and $s_i f \leq g$.  If $i \in D_R(g) \setminus D_R(f)$ then $f \leq gs_i$ and $ fs_i \leq g$.\end{lemma}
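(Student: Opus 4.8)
The plan is to recognize this as the \emph{lifting property} of Bruhat order, which holds in an arbitrary Coxeter group and hence in $\tS_{2n}$, and to deduce it from the subword characterization of Bruhat order (\cite[Theorem 2.2.2]{BB}). First I would dispose of the right-descent clause for free: Bruhat order is invariant under the anti-automorphism $h \mapsto h^{-1}$, and $D_R(h) = D_L(h^{-1})$, so applying the left-descent clause to the pair $(f^{-1},g^{-1})$ gives the right-descent clause. Everything then reduces to the following: if $f \le g$, $s_i g < g$, and $s_i f > f$, then $f \le s_i g$ and $s_i f \le g$.

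Next I would use $i \in D_L(g)$ to choose a reduced word for $g$ of the form $s_i s_{a_2}\cdots s_{a_\ell}$, so that $s_{a_2}\cdots s_{a_\ell}$ is a reduced word for $s_i g$ (here $\ell(s_i g) = \ell(g)-1$). Since $f \le g$, the subword property furnishes a reduced subword of $s_i s_{a_2}\cdots s_{a_\ell}$ spelling $f$. The crucial observation is that this subword cannot use the leading letter $s_i$: if it did, $f$ would have a reduced word beginning with $s_i$, forcing $i \in D_L(f)$, contrary to hypothesis. Hence the subword lies entirely in the tail $s_{a_2}\cdots s_{a_\ell}$, which is a reduced word for $s_i g$, and therefore $f \le s_i g$. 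For the second conclusion I would use $i \notin D_L(f)$, so that $\ell(s_i f) = \ell(f)+1$ and prepending $s_i$ to any reduced word for $f$ produces a reduced word for $s_i f$. Combining this with a reduced embedding of some reduced word for $f$ into a reduced word for $s_i g$ (available from $f \le s_i g$ just proved) and then prepending $s_i$ to that whole word yields a reduced word for $g$ (since $\ell(g) = \ell(s_i g)+1$) that contains a reduced word for $s_i f$ as a subword; by the converse direction of the subword property, $s_i f \le g$.

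I expect no genuine obstacle here — this is a standard fact — so the only real care goes into keeping straight which direction of the subword property is invoked where: the direction "every reduced word of the larger element contains a reduced subword for the smaller" when extracting $f$ out of $g$, versus the converse "exhibiting one reduced subword suffices" when reassembling $g$ from $s_i f$, together with the elementary length bookkeeping $\ell(s_i g) = \ell(g)-1$ and $\ell(s_i f) = \ell(f)+1$. An alternative route that avoids the subword property is an induction on $\ell(g)-\ell(f)$ applying the (strong) exchange condition along a saturated chain from $f$ to $g$; it works but is not cleaner, so I would present the subword argument. Since $\tS_{2n}$ is itself a Coxeter group, nothing particular to $P_n$ or to the embedding $\iota$ enters this lemma — it is a purely Coxeter-theoretic input to be used later.
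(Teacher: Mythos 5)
Your proof is correct. Note, however, that the paper does not actually prove this lemma: it is stated as a standard fact with a citation to \cite[Proposition 2.2.7]{BB} (the lifting property of Bruhat order), and no argument is given. Your write-up supplies a correct self-contained derivation of exactly that property: the reduction of the right-descent clause to the left-descent clause via $h \mapsto h^{-1}$ is sound, the extraction of a reduced subword for $f$ from a reduced word $s_i s_{a_2}\cdots s_{a_\ell}$ of $g$ that necessarily avoids the leading letter (else $i \in D_L(f)$) gives $f \leq s_i g$, and prepending $s_i$ to the embedded reduced word for $f$ inside $s_{a_2}\cdots s_{a_\ell}$ correctly produces a reduced word for $s_i f$ sitting inside a reduced word for $g$, giving $s_i f \leq g$. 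The length bookkeeping $\ell(s_i g) = \ell(g) - 1$ and $\ell(s_i f) = \ell(f) + 1$ is exactly what is needed, and both directions of the subword characterization are invoked correctly. In short: the paper outsources this to the literature, and your argument is a faithful reconstruction of the standard proof it points to; nothing specific to $\tS_{2n}$, $P_n$, or the embedding $\iota$ is needed, as you observe.
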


For $\tau \in P_n$ and $i \in \Z/2n\Z$, we let
$$
i \in \begin{cases}  A(\tau) &\mbox{if the strands labeled $i$ and $i+1$ do not cross,}\\
 B(\tau) &\mbox{if the strands labeled $i$ and $i+1$ cross,} \\
 C(\tau) &\mbox{if $i$ is matched with $i+1$,}
 \end{cases}
$$
so that we have a disjoint union $\Z/2n\Z = A(\tau) \cup B(\tau) \cup C(\tau)$.   Define $s_i \cdot \tau$ by 
$$
g_{s_i \cdot \tau} = s_i g_\tau s_i.
$$
For example, if $i \in A(\tau)$ then
$s_i \cdot \tau$ is obtained from $\tau$ by adding a crossing between strands $i$ and $i+1$, close to the boundary at $i$ and $i+1$. Note that we have
$$
i \in \begin{cases} A(\tau) & \mbox{if $s_i g_\tau s_i < g_\tau$ or, equivalently, $s_i \cdot \tau \gtrdot \tau$,} \\
B(\tau) & \mbox{if $s_i g_\tau s_i > g_\tau$ or, equivalently, $s_i \cdot \tau \lessdot \tau$,} \\
C(\tau) & \mbox{if $s_i g_\tau s_i = g_\tau$ or, equivalently, $s_i \cdot \tau = \tau$.}
\end{cases}
$$

\begin{lemma}\label{lem:main}
Suppose $\tau \leq \eta$ in $P_n$.  If $i \in A(\tau) \cap B(\eta)$ then $\tau \leq s_i \cdot \eta$ and $s_i \cdot \tau \leq \eta$.
\end{lemma}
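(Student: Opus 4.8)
The plan is to push the whole statement into the affine symmetric group $\tS_{2n}$ through the embedding $\iota$ of \cite{Lam}. Since $\iota$ realizes $P_n$ as the \emph{dual} of an induced subposet of $\tS_{2n}$, the hypothesis $\tau\le\eta$ in $P_n$ reads $g_\eta\le g_\tau$ in Bruhat order on $\tS_{2n}$, and the two conclusions $s_i\cdot\tau\le\eta$ and $\tau\le s_i\cdot\eta$ read, respectively, $g_\eta\le s_ig_\tau s_i$ and $s_ig_\eta s_i\le g_\tau$ (using $g_{s_i\cdot\tau}=s_ig_\tau s_i$ and likewise for $\eta$). The hypotheses on $i$, as recorded in the discussion of $A,B,C$ above, become $s_ig_\tau s_i<g_\tau$ (from $i\in A(\tau)$) and $s_ig_\eta s_i>g_\eta$ (from $i\in B(\eta)$). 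So the task is: given $g_\eta\le g_\tau$, $s_ig_\tau s_i<g_\tau$, and $s_ig_\eta s_i>g_\eta$, deduce $g_\eta\le s_ig_\tau s_i$ and $s_ig_\eta s_i\le g_\tau$.

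The first step is a short length computation turning the two-sided conditions into honest descent data. If $s_igs_i<g$ then, since $\ell(s_igs_i)-\ell(g)\in\{-2,0,2\}$, necessarily $\ell(s_igs_i)=\ell(g)-2$; comparing with $\ell(s_ig)=\ell(g)\pm1$ and $\ell(gs_i)=\ell(g)\pm1$ forces $\ell(s_ig)=\ell(gs_i)=\ell(g)-1$, i.e.\ $i\in D_L(g)\cap D_R(g)$. Applied to $g_\tau$ this gives $i\in D_L(g_\tau)\cap D_R(g_\tau)$ with $\ell(s_ig_\tau s_i)=\ell(g_\tau)-2$. The mirror computation for $s_ig_\eta s_i>g_\eta$ gives $\ell(s_ig_\eta s_i)=\ell(g_\eta)+2$ and hence $i\notin D_L(g_\eta)$ and $i\notin D_R(g_\eta)$.

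Next I would run Lemma \ref{L:Bruhat} twice for each of the two target inequalities, starting each time from $g_\eta\le g_\tau$. For $g_\eta\le s_ig_\tau s_i$: since $i\in D_L(g_\tau)\setminus D_L(g_\eta)$, Lemma \ref{L:Bruhat} gives $g_\eta\le s_ig_\tau$; the length identities above give $\ell(s_ig_\tau s_i)=\ell(s_ig_\tau)-1$ and $\ell(g_\eta s_i)=\ell(g_\eta)+1$, so $i\in D_R(s_ig_\tau)\setminus D_R(g_\eta)$, and a second application of Lemma \ref{L:Bruhat} yields $g_\eta\le (s_ig_\tau)s_i=s_ig_\tau s_i$. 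Symmetrically, for $s_ig_\eta s_i\le g_\tau$: since $i\in D_R(g_\tau)\setminus D_R(g_\eta)$, Lemma \ref{L:Bruhat} gives $g_\eta s_i\le g_\tau$; and $\ell(g_\eta s_i)=\ell(g_\eta)+1=\ell(s_ig_\eta s_i)-1$ shows $i\notin D_L(g_\eta s_i)$ while $i\in D_L(g_\tau)$, so a second application gives $s_i(g_\eta s_i)=s_ig_\eta s_i\le g_\tau$. Translating back through $\iota$ yields the two claimed relations $s_i\cdot\tau\le\eta$ and $\tau\le s_i\cdot\eta$.

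I do not expect a genuine obstacle here: the whole argument is the lifting property of Lemma \ref{L:Bruhat} applied on alternating sides, and the only thing needing care is the left/right bookkeeping — at each step the reflection used must be a descent of the larger element but not of the smaller one. The feature that makes those intermediate descent conditions automatic is precisely that in cases $A$ and $B$ conjugation by $s_i$ changes the length by exactly $2$ (never $0$), which is where the hypothesis $i\in A(\tau)\cap B(\eta)$, as opposed to $i\in C(\tau)$ or $i\in C(\eta)$, gets used.
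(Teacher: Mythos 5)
Your proposal is correct and follows essentially the same route as the paper: translate through the duality $\tau\le\eta \Leftrightarrow g_\eta\le g_\tau$, note that $i\in A(\tau)$, $i\in B(\eta)$ give the descent/non-descent data, and apply the lifting property (Lemma \ref{L:Bruhat}) twice, once on each side. The paper merely does one of the two inequalities and declares the other ``similar,'' and multiplies on the left before the right, but these are cosmetic differences.
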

\begin{proof}
Since $i \in A(\tau)$ we have $i \in D_L(g_\tau) \cap D_R(g_\tau)$.  Since $i \in B(\eta)$ we have $i \notin D_L(g_\eta) \cup D_R(g_\eta)$.  By Lemma \ref{L:Bruhat}, we have $s_i g_\eta \leq g_\tau$.  
We also have, $i \notin D_R(s_ig_\eta)$ since $g_{s_i \cdot \eta} = s_ig_\eta s_i > s_i g_\eta$.  So by Lemma \ref{L:Bruhat} again, we have $s_i g_\eta s_i \leq g_\tau$, or equivalently, $g_{s_i\cdot \eta} \leq g_{ \tau}$, or equivalently, $\tau \leq s_i \cdot \eta$.  The proof of $s_i \cdot \tau \leq \eta$ is similar.
\end{proof}

The following result follows easily from the ``uncrossing" definition.
\begin{lemma}\label{lem:empty}
Suppose $\tau \leq \eta$ and $i \in A(\tau)$.  Then $i \notin C(\eta)$.
\end{lemma}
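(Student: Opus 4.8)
The plan is to prove the slightly stronger statement from which the lemma is immediate: \emph{if $\tau' \lessdot \tau$ in $P_n$ and $i \in C(\tau)$, then $i \in C(\tau')$.} Since $\le$ is by definition the transitive closure of the cover relations, this gives, for every $\tau \le \eta$, that $i \in C(\eta)$ implies $i \in C(\tau)$; and because $\Z/2n\Z = A(\tau)\cup B(\tau)\cup C(\tau)$ is a disjoint union, $i \in A(\tau)$ then forces $i \notin C(\eta)$, which is the lemma. So the whole content is to track the property ``$i$ is matched with $i+1$'' through a single downward cover.

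The first step is a remark about lensless medial graphs: \emph{if $i$ is matched with $i+1$ in a matching $\sigma$, then in every lensless medial graph $G$ representing $\sigma$ the strand $w$ joining $i$ and $i+1$ crosses no other strand.} After applying loop removal (Figure~\ref{fig:loop}) to delete any self-crossings, we may take $w$ to be an embedded arc; then $w$, together with the boundary arc from $i$ to $i+1$ that contains no other marked point, bounds a disk $R$ whose interior and whose bounding circle-arc contain no marked point. Any strand $v \ne w$ has both of its endpoints on the complementary boundary arc, hence outside $\overline R$; so if $v$ met $w$ it would have to cross $\partial R$, and hence $w$ (it meets the circle only at its two labelled endpoints), an even and positive, thus $\ge 2$, number of times -- contradicting lenslessness of $G$.

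The second step handles one cover. By the definition of $\lessdot$, choose a lensless $G$ representing $\tau$ in which resolving a single crossing, followed by deletion of any resulting closed interior loops, gives a lensless $G'$ representing $\tau'$. By the first step the strand $w$ of $G$ from $i$ to $i+1$ is disjoint from every other strand; in particular the crossing that gets resolved involves only strands other than $w$, since each marked point lies on a unique strand and so the strands through that crossing do not have $i$ or $i+1$ as an endpoint. Resolving such a crossing, and deleting closed interior loops -- which $w$ can never be, as it has the boundary endpoints $i$ and $i+1$ -- leaves $w$ untouched inside $G'$. Hence $i$ is still matched with $i+1$ in $\tau'$, i.e.\ $i \in C(\tau')$, completing the induction. (This is of course consistent with, and could also be deduced from, Lemma~\ref{lem:moreuncrossing}.)

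The only delicate point is the first step: ruling out crossings of the innermost strand $w$ is precisely where the hypothesis ``lensless'' is used, and one must keep in mind that a strand meets the boundary circle only at its two labelled endpoints and that $w$ may be assumed simple. Everything afterwards is bookkeeping with the definition of the cover relation. An alternative argument can be given on the affine side via Lemma~\ref{L:Bruhat} (using that $i\in A(\tau)$ means $i\in D_L(g_\tau)\cap D_R(g_\tau)$ while $i\in C(\eta)$ means $s_ig_\eta = g_\eta s_i \ne g_\eta$), but the medial-graph argument above is the ``easy'' one alluded to, and it is the cleaner of the two.
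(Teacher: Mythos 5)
Your proof is correct and follows exactly the route the paper intends: the paper gives no argument beyond asserting that the lemma ``follows easily from the uncrossing definition,'' and your reduction to a single cover, together with the observation that in a lensless medial graph the strand joining $i$ to $i+1$ meets no other strand (by the parity/lens argument on the disk $R$), is precisely that easy argument with the details supplied. No gaps.
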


\begin{proof}[Proof of Theorem \ref{thm:main}]
We shall first prove the theorem for intervals $\tau < \eta$ of $P_n$; that is, intervals not involving $\hat 0$.  We proceed by descending induction on $c(\tau) + c(\eta)$.  The base case where $\eta$ is the maximal element of $P_n$ and $c(\tau) = \binom{n}{2}-1$ is clear.  Also if $c(\eta) -c(\tau) = 1$ the result is clear, so we may assume that $c(\eta) - c(\tau) \geq 2$.

Since $\tau$ is not the maximal element, $D_L(g_\tau)$ and $D_R(g_\tau)$ are non-empty, so $A(\tau)$ is non-empty (see also the proof of \cite[Lemma 4.14]{Lam}).  Let $i \in A(\tau)$.  

Case 1: $i \in A(\eta)$.  Then 
\begin{equation}\label{eq:twoterms}
[\tau,\eta] = [ \tau,s_i \cdot \eta] \setminus \{\sigma \mid \tau \leq \sigma \leq s_i \cdot \eta,  \sigma \not \leq \eta\}.
\end{equation}
We claim that 
\begin{equation}\label{eq:equals}
\{\sigma \mid \tau \leq \sigma \leq s_i \cdot \eta,  \sigma \not \leq \eta\} = \{\sigma \mid s_i \cdot \tau \leq \sigma \leq s_i \cdot \eta,  \sigma \not \leq \eta\}
\end{equation}
Suppose $\tau \leq \sigma \leq s_i \cdot \eta$ and $\sigma \not \leq \eta$.  If $i \in A(\sigma)$, then since $i \in B(s_i \cdot \eta)$, applying Lemma \ref{lem:main} to $\sigma < s_i \cdot \eta$ we would obtain $\sigma \leq \eta$, contradicting our assumption.  If $i \in C(\sigma)$, then Lemma \ref{lem:empty} would be violated.  Thus $i \in B(\sigma)$, or $s_i \cdot \sigma \lessdot \sigma$.  Now apply Lemma \ref{lem:main} to $\tau \leq \sigma$ to obtain $s_i \cdot \tau \leq \sigma$.  This proves \eqref{eq:equals}.

But we have
$$
\{\sigma \mid s_i \cdot \tau \leq \sigma \leq s_i \cdot \eta,  \sigma \not \leq \eta\} = [s_i \cdot \tau,s_i \cdot \eta] \setminus  [s_i \cdot \tau,\eta]
$$
so by induction $\chi(\{\sigma \mid s_i \cdot \tau \leq \sigma \leq s_i \cdot \eta,  \sigma \not \leq \eta\}) = \chi([s_i \cdot \tau,s_i \cdot \eta] ) - \chi([s_i \cdot \tau,\eta]) = 0$.  By induction again, $\chi([\tau,s_i \cdot \eta])= 0$ so using \eqref{eq:twoterms} we obtain $\chi([\tau,\eta]) = 0$.

Case 2: $i \in B(\eta)$.  We apply Lemma \ref{lem:main} to see that $\sigma \in [\tau,\eta]$ implies $s_i \cdot \sigma \in [\tau,\eta]$.  By Lemma \ref{lem:empty}, it follows that $\sigma \mapsto s_i\cdot \sigma$ is an involution which swaps elements of odd rank with elements of even rank.  Thus $\chi([\tau,\eta]) = 0$.

Case 3: $i \in C(\eta)$.  This is impossible by Lemma \ref{lem:empty}.

Thus we have shown that $\chi([\tau,\eta]) = 0$ for intervals $\tau < \eta$ not involving $\hat 0$.

\bigskip

Now suppose $\tau = \hat 0$ and $\hat 0 < \eta$.  We may suppose that $c(\eta) \geq 1$.  Then $B(\eta) \neq \emptyset$.  Let $i \in B(\eta)$.  By Lemma \ref{lem:main}, the map $\sigma \mapsto s_i \cdot \sigma$ establishes an involution on the set $\{\sigma \in (\hat 0, \eta] \mid i \notin C(\eta)\}$, and this involution swaps the parity of $c(\sigma)$.  Let 
$$
S = \{\sigma \in [\hat 0, \eta) \mid i \in C(\eta) \text{ or } \sigma = \hat 0\}.
$$ It thus suffices to show that $\chi(S) = 0$.  We claim that $S$ has a unique maximal element $\kappa$.  This would complete the proof since then $S = [\hat 0, \kappa]$ and we may proceed by induction.

We now construct $\kappa \in P_n$.  Let $G$ be a medial graph representing $\eta$.  The strands $p_i$ and $p_{i+1}$ beginning at $i$ and $i+1$ cross each other since $i \in B(\eta)$.  We shall assume $p_i$ and $p_{i+1}$ cross each other at $q$ before intersecting any other strands.  Let $G'$ be obtained from $G$ by uncrossing $q$ so that $i$ is matched directly with $i+1$.  Let $\kappa \in P_n$ be represented by $G'$.  

Suppose $\sigma \in S$.  We need to show that $\sigma \leq \kappa$.  A (lensless) medial graph $G''$ representing $\sigma$ can be obtained from $G$ by uncrossing some subset $\C$ of the crossings of $G$ (see the comment after \cite[Lemma 4.11]{Lam}).  Since $i \in C(\sigma)$ we must have $q \in \C$.  If $q$ is resolved in $G''$ in the same way as in $G'$ then we are done: $G''$ can be obtained from $G'$ by a number of uncrossings, and Lemma \ref{lem:moreuncrossing} gives $\sigma \leq \kappa$.  Now suppose $q$ is resolved in $G''$ in the direction different to the one in $G'$.  We then observe that the medial graph $G'''$ obtained from $G''$ by resolving the uncrossing $q$ in the other direction also represents the same matching $\sigma$ (a closed interior loop will appear in $G'''$, which can be removed).  This completes the proof that $S = [0,\kappa]$, and thus the theorem.
\end{proof}

\end{document}